\def\AA{{\mathbb A}}
\def\CC{{\mathbb C}}
\def\FF{{\mathbb F}}
\newcommand{\Cc}{\mathcal{C}}
\def\*C{{}^*\hspace*{-1pt}{\Cc}}
\def\text#1{{\rm {\rm #1}}}
 \def\1{\mathbf{1}}
\def\sna#1#2{\mathrm{sna}{(#1 , #2)}}
\def\sl#1#2{\mathrm{sl}{(#1 , #2)}}
\newtheorem{theorem}{Theorem}[section]
\newtheorem{proposition}[theorem]{Proposition}
\theoremstyle{definition}
\newtheorem{definition}[theorem]{Definition}
\theoremstyle{remark}
\newtheorem{remark}[theorem]{Remark}
\newtheorem*{example}{Example}
\numberwithin{equation}{section}
\newcounter{rlist}
\begin{document}

\title{Special normalised affine matrices. An example of a  Lie affgebra.}

 \author{Tomasz Brzezi\'nski}
 \address{Department of Mathematics, Swansea University\\
Fabian Way,
Swansea
SA1 8EN, U.K.  \\
\and\\ Department of Mathematics, University of Bia\l ystok\\ K.\ Cio\l kowskiego 1M, 15--245 Bia\l ystok, Poland} 
  \email{T.Brzezinski@swansea.ac.uk}   

\thanks{The research  is partially supported by the National Science Centre, Poland, grant no.\ 2019/35/B/ST1/01115.}
\dedicatory{In memory of Anatol Odzijewicz}

\subjclass{20N10; 16E40; 81R12}
\keywords{affine space; Lie bracket; matrices}

\begin{abstract}
The affine space of traceless complex matrices in which the sum of all elements in every row and every column is equal to one is presented as an example of an affine space with a Lie bracket or a Lie affgebra. \end{abstract}

\maketitle

\section{Introduction}
A choice of frame of reference  to describe classical motion of particles in space amounts to choosing a particular co-ordinate system or a vector space. The origin of the co-ordinate system (or the zero vector in the vector space) is determined by a position of the observer. The Tulczyjew programme of a frame independent approach to classical mechanics \cite{Tul:fra} postulated to replace vector spaces (co-ordinate systems) by affine spaces; this is most natural since there are no distinguished points in an affine space. The implementation of this programme involved in particular developing the theory of bundles with affine fibres (rather than vector bundles) \cite{GraGra:fra} and more generally the AV-differential geometry \cite{GraGra:av1}, \cite{GraGra:av2}. Along the way a notion of a Lie bracket on an affine space or a {\em Lie affgebra} was proposed in \cite{GraGra:Lie}.  As impressive as these developments are they still treat an affine space relative to a distinguished vector space and make an extensive use of this space. One might argue that at this point a choice of a frame reference is made and the departure from the spirit of the Tulczyjew programme becomes inevitable.

In recent years a deeper understanding of algebraic structures on affine spaces defined with no reference to a vector spaces was achieved (see e.g.\ \cite{Brz:tru}, \cite{Brz:par}, \cite{BreBrz:hea}). Guided by this a proposal for a Lie algebra structure on an intrinsically defined affine space was made in \cite{BrzPap:Lie}. The aim of this note is to summarise both the vector-space free approach to affine spaces and the proposal for a Lie bracket with no reference to a vector space made in \cite{BrzPap:Lie}, and to illustrate it by an explicit example of a Lie structure on an affine space of special normalised affine matrices, that is, traceless complex matrices with sums of elements in each column and in each row being equal to one.

\section{Affine spaces without vector spaces}
Traditionally an affine space over a field $\FF$ is defined as a pair $(A,\overset\rightarrow A)$ in which $A$ is a set and $\overset\rightarrow A$ is a vector space which acts freely and transitively on the set $A$. Denoting this action by $+$, for any vector $v\in \overset\rightarrow A$ and any $a\in A$, there is a unique point $a+v\in A$, and for any $a,b\in A$ there is a unique vector $\overset\longrightarrow{ab}\in \overset\rightarrow A$ such that $b=a+\overset\longrightarrow{ab}$. An affine transformation from $(A,\overset\rightarrow A)$   to $(B,\overset\rightarrow B)$ is a function $f:A\to B$ for which there exists a necessarily unique linear transformation $\hat f : \overset\rightarrow A \to \overset\rightarrow B$ such that
\begin{equation}\label{affine.tran.v}
\hat f \big(\overset\longrightarrow{ab}\big) = \overset{-\!-\!-\!-\!\longrightarrow}{f(a)f(b)}, \qquad \mbox{for all $a,b\in A$}.
\end{equation}

It is a well known, but for quite a while overlooked, fact (see \cite{BreBrz:hea} for a discussion and references) that one can define affine spaces intrinsically, with no reference to vectors. We explain this presently. 

The first observation is that although there is no natural binary operation on an affine space, the combination of any three elements $a,b,c\in A$:
\begin{equation}\label{affine.heap}
\langle a,b,c\rangle := a + \overset\longrightarrow{bc},
\end{equation}
makes a well-defined function $A\times A\times A\to A$, that equips $A$ with the structure of  an {\em abelian heap}, a notion introduced by Pr\"ufer in \cite{Pru:the}. That is, for all $a,b,c,d,e\in A$,
\begin{equation}\label{heap}
\langle\langle a,b,c\rangle,d,e\rangle = \langle a,b,\langle c,d,e\rangle\rangle, \quad \langle a,a,b\rangle = b, \quad \langle a,b,c\rangle = \langle c,b,a\rangle.
\end{equation}
In view of the first and the last of conditions \eqref{heap} the distribution of the brackets $\langle,\rangle$ in-between an odd number of elements of $A$ does not matter, hence there is no need to write any internal brackets indicating multiple application of the heap operation.

Any abelian group $A$ is a heap with operation $\langle a,b,c\rangle = a-b+c$. Conversely, given any (non-empty) heap $A$ and $o\in A$, the operation 
\begin{equation}\label{plus}
a+b = \langle a,o,b\rangle
\end{equation}
 makes $A$ into an abelian group which we will denote by $A_o$. One easily checks that $o$ is a neutral element in $A_o$ and that $-a = \langle o,a,o\rangle$. Furthermore, 
 \begin{equation}\label{+-}
 \langle a,b,c\rangle = a-b+c \quad \mbox{(in $A_o$)}. 
 \end{equation}
 Because of this correspondence, the reader might find it more convenient to view abelian heaps as abelian groups with a single ternary operation defined by a combination of addition and subtraction as in \eqref{+-}.
 
 A {\em homomorphism of heaps} is a mapping $f: A\to B$ preserving ternary operations, that is 
\begin{equation}\label{heap.mor}
f(\langle a,b,c\rangle) = \langle f(a),f(b),f(c)\rangle, \quad \mbox{for all $a,b,c\in A$}.
\end{equation}
A homomorphism of abelian groups is automatically a homomorphism of corresponding heaps, but not vice versa.
More information about heaps can be found for example in \cite[Section~2]{Brz:par}.

Returning to an affine space $(A,\overset\rightarrow A)$, the combination of a scalar $\lambda \in \FF$ with any two elements $a,b\in A$, 
\begin{equation}\label{action}
\lambda\triangleright_a b:= a+ \lambda \overset\longrightarrow{ab},
\end{equation}
defines a function $\FF\times A\times A\to A$, $(\lambda,a,b)\mapsto \lambda\triangleright_a b$, satisfying the following conditions, for all $\lambda,\mu,\nu \in \FF$ and $a,b,c,d\in A$:
\begin{subequations}\label{action.ax}
\begin{equation}\label{heap.ax}
(\lambda - \mu + \nu)\triangleright_a b = \langle \lambda \triangleright_a b,  \mu \triangleright_a b, \nu \triangleright_a b\rangle, \;\;\; \lambda \triangleright_a \langle b,c,d\rangle = \langle \lambda \triangleright_a b,  \lambda \triangleright_a c, \lambda \triangleright_a d\rangle,
\end{equation}
\begin{equation}\label{ass.un}
(\lambda \mu)\triangleright_a b = \lambda \triangleright_a (\mu\triangleright_a b), \quad 1\triangleright_a b = b, \quad 0 \triangleright_a b =a,
\end{equation}
\begin{equation}\label{base.change}
\lambda \triangleright_a b = \langle \lambda \triangleright_c b, \lambda \triangleright_c a, a\rangle.
\end{equation}
\end{subequations}

Equivalently to the traditional definition and fully intrinsically, an {\em affine space} over $\FF$ can be defined as a set $A$ together with operations 
\begin{equation}\label{affine.oper}
\langle -,-,-\rangle: A^3\to A \quad \mbox{and}\quad  -\triangleright_{-} - :\FF\times A^2 \to A,
\end{equation}
 which satisfy conditions \eqref{heap} and \eqref{action.ax}. Given affine spaces $A$ and $B$ an affine map from $A$ to $B$ is a function $f:A\to B$ that in addition to preserving the heap operations as in \eqref{heap.mor} preserves the actions, that is, for all $a,b\in A$ and $\lambda\in \FF$,
\begin{equation}\label{mor.act}
f(\lambda \triangleright_a b) = \lambda \triangleright_{f(a)}f(b) .
\end{equation}
Given such defined affine space $A$ a vector space can be assigned to \textbf{any} choice of an element of $o\in A$. The addition of vectors is defined by the formula \eqref{plus}, while multiplication by scalars  is defined by
\begin{equation}\label{scalar}
\lambda a = \lambda \triangleright_o a.
\end{equation}
We denote this vector space by $\mathcal{V}(A_o)$. An affine map $f: A\to B$ induces a unique linear transformation  $\hat f: \mathcal{V}(A_o) \to \mathcal{V}(B_{\tilde{o}})$ by the formula:
\begin{equation}\label{aff.lin}
\hat f(a) = f(a) - f(o),
\end{equation}
in accord with the traditional definition of an affine map \eqref{affine.tran.v}.

This intrinsic approach to affine spaces is in perfect concord with the principle mentioned in Introduction, according to which classical physical phenomena occur, and hence can be described, independently of an observer. Introduction of an observer into the system amounts to the specification of a point of reference or the zero of the vector space. Vector spaces corresponding to different observers (choices of the zero vector $o$), albeit not necessarily identical, are mutually isomorphic, and hence descriptions by different observers of the same physical phenomena are mutually equivalent.

The  {\em dimension} of an affine space $A$ is defined as the dimension of any of the  vector spaces $\mathcal{V}(A_o)$.

\section{Lie affgebras and their reduction to Lie algebras}
The definition of a Lie bracket on an affine space traditionally defined as a pair $(A,\overset \to A)$ proposed in \cite{GraGra:Lie} makes heavy use of the linear structure of $\overset \to A$.  A {\em vector-valued Lie bracket} on $(A,\overset \to A)$ is an anti-symmetric bi-affine map
    $$
    [-,-]_{v}: A\times A \longrightarrow \overset{\rightarrow}{A},
    $$
    that satisfies the Jacobi identity in $\overset{\rightarrow}{A}$:
    \begin{equation}\label{Jacobi.v}
        \widehat{[a,}[b,c]_v]_v + \widehat{[b,}[c,a]_v]_v + \widehat{[c,}[a,b]_v]_v = 0,
    \end{equation}
    where $\widehat{[a,}-]_v$ is the linearisation of the map  $[a,-]_v$ etc.

The aim of the intrinsic definition of a Lie affgebra in \cite{BrzPap:Lie} is to form a Lie structure on an affine space with no reference to a vector space. Such a structure should include the vector-valued Lie brackets of \cite{GraGra:Lie} as a possibly special case. In addition, in the same way as a heap can be retracted to a group by making a choice of an element, it is desired the Lie bracket on an affine space to reduce to a usual Lie algebra structure for any choice of an underlying vector space.
\begin{definition}[\cite{BrzPap:Lie}]\label{def.Lie}
Let $A$ be an affine space over $\FF$. A {\em Lie bracket} on $A$ is a bi-affine operation $[-,-]: A\times A \to A$ such that, for all $a,b\in A$,
\begin{subequations}\label{Lie}
\begin{equation}\label{anti}
\big\langle [a,b], [a,a], [b,a] \big\rangle = [b,b],
\end{equation}
\begin{equation}\label{Jacobi}
\big\langle [a,[b,c]], [a,a], [b,[c,a]], [b,b], [c,[a,b]] \big\rangle = [c,c].
\end{equation}
\end{subequations}
An affine space with a specified Lie bracket is called a {\em Lie affgebra}. Given two Lie affgebras, an affine map that respects Lie brackets is called a {\em Lie affgebra map}.
\end{definition}

It might be worth pointing out that the condition \eqref{anti} that replaces the antisymmetry is invariant under the exchange $a\leftrightarrow b$, while the Jacobi \eqref{Jacobi} is invariant under the cyclic operation:
$$
\xymatrix{ a \ar[rr] && b \ar[dl]\\ & c\ar[ul]  & }
$$

By making the choice of $o\in A$ and setting $\overset \to A = A_o$ one recovers the vector-valued Lie bracket  on $(A,\overset \to A)$ through the correspondence
\begin{equation}\label{Lie.vv}
[a,b]_v = [a,b] - b,
\end{equation} 
provided that $[a,a]=a$, for all $a\in A$.

\begin{proposition}[\cite{BrzPap:Lie}]\label{prop.Lie}
Let $A$ be an affine space and $o\in A$. Given Lie bracket $[-,-]$ on $A$ define
\begin{equation}\label{Lie.o}
[a,b]_o = \big\langle [a,b], [a,o],[o,o],[o,b], o\big\rangle, \qquad \mbox{for all $a,b\in A$}.
\end{equation}
Then the $\mathcal{V}(A_o)$ is a Lie algebra with the bracket $[-,-]_o$. We denote it by $\mathcal{L}(A;o)$.
\end{proposition}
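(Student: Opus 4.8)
The plan is to fix the origin $o$ once and for all, pass to the vector space $V:=\mathcal{V}(A_o)$, and reduce everything to the elementary structure theory of bi-affine maps. All computations below take place inside $V$, in which the zero vector is $o$, addition is $a+b=\langle a,o,b\rangle$, scalar multiplication is $\lambda a=\lambda\triangleright_o a$, and the heap operation is $\langle a,b,c\rangle=a-b+c$ by \eqref{+-}; moreover, by \eqref{aff.lin}, an affine map of affine spaces is, in these coordinates, a linear map followed by a translation. The preliminary observation, which underlies the whole argument, is that the bi-affine bracket decomposes in $V$-coordinates as
\begin{equation*}
[a,b]=B(a,b)+\ell_1(a)+\ell_2(b)+c_0,
\end{equation*}
where $B\colon V\times V\to V$ is bilinear, $\ell_1,\ell_2\colon V\to V$ are linear, and $c_0:=[o,o]\in V$. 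Indeed $B(a,b):=[a,b]-[a,o]-[o,b]+[o,o]$ is, for each fixed $b$, affine in $a$ and vanishes at $a=o$, hence linear in $a$, and symmetrically linear in $b$; taking $\ell_1(a):=[a,o]-[o,o]$ and $\ell_2(b):=[o,b]-[o,o]$ (linear, being the linearisations of affine maps) one then checks the displayed identity directly.

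Granting this, the first step is immediate: using $\langle x_1,x_2,x_3,x_4,x_5\rangle=x_1-x_2+x_3-x_4+x_5$ in $V$ one gets $[a,b]_o=[a,b]-[a,o]+[o,o]-[o,b]+o$, and substituting the decomposition, with $B(a,o)=B(o,b)=0$, all terms involving $\ell_1,\ell_2,c_0$ cancel, leaving $[a,b]_o=B(a,b)$. Hence $[-,-]_o$ is a bilinear map $V\times V\to V$, and it remains only to extract the two Lie algebra axioms from \eqref{anti} and \eqref{Jacobi}.

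For antisymmetry, rewrite \eqref{anti} in $V$ as $[a,b]-[a,a]+[b,a]-[b,b]=0$; on substituting the decomposition the $\ell_1,\ell_2,c_0$ parts cancel and there remains $B(a,b)+B(b,a)-B(a,a)-B(b,b)=0$ for all $a,b$. Setting $b=o$ and using bilinearity gives $B(a,a)=0$, and therefore $B(a,b)=-B(b,a)$, so $[a,a]_o=o$ and $[-,-]_o$ is antisymmetric. For the Jacobi identity, rewrite \eqref{Jacobi} in $V$ as the identity $P\equiv 0$, with
\begin{equation*}
P(a,b,c):=[a,[b,c]]-[a,a]+[b,[c,a]]-[b,b]+[c,[a,b]]-[c,c].
\end{equation*}
After substituting the decomposition, $P$ is a polynomial map $V^3\to V$ that is affine in each of $a,b,c$ separately; the diagonal terms $[x,x]$ each involve a single variable, and expanding a term like $[a,[b,c]]$ shows that the part of it depending linearly on all three arguments simultaneously is exactly $B(a,B(b,c))$ (the inner $[b,c]$ contributing its bilinear piece $B(b,c)$, the other pieces being linear in at most one of $b,c$). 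Hence the component of $P$ that is multilinear in all of $a,b,c$ equals $B(a,B(b,c))+B(b,B(c,a))+B(c,B(a,b))$, and since $P$ vanishes identically so does this component. That component is isolated by the inclusion--exclusion combination
\begin{equation*}
P(a,b,c)-P(o,b,c)-P(a,o,c)-P(a,b,o)+P(a,o,o)+P(o,b,o)+P(o,o,c)-P(o,o,o),
\end{equation*}
which involves no division and is therefore valid over an arbitrary field. We conclude that $B=[-,-]_o$ satisfies the Jacobi identity, so $\big(\mathcal{V}(A_o),[-,-]_o\big)=\mathcal{L}(A;o)$ is a Lie algebra.

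The hard part will be the bookkeeping in the Jacobi step: one must verify carefully that, after the substitution, $[a,[b,c]]$ really is affine in each of $a,b,c$ and that the coefficient of its fully multilinear part is precisely $B(a,B(b,c))$, the contributions of the diagonal terms $[x,x]$ and of all the lower-order pieces to that multidegree being zero. Once this is in place the Jacobi identity for $[-,-]_o$ falls out of $P\equiv 0$ with no further computation, and the other two Lie algebra axioms are, as above, entirely routine.
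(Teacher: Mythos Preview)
Your proof is correct and follows exactly the approach the paper sketches: the paper does not prove the proposition in full but observes that \eqref{Lie.o} is the simultaneous linearisation \eqref{aff.lin} of the bi-affine map $[-,-]$ in both arguments (hence bilinear), and then says antisymmetry follows from \eqref{anti} and Jacobi from \eqref{Jacobi}. Your decomposition $[a,b]=B(a,b)+\ell_1(a)+\ell_2(b)+c_0$, identification $[a,b]_o=B(a,b)$, and inclusion--exclusion extraction of the trilinear part from $P\equiv 0$ are precisely the details the paper suppresses and attributes to \cite{BrzPap:Lie}; nothing in your argument departs from that line.
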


Rather than giving a proof (which can be found in \cite{BrzPap:Lie}) we will just make a few comments. First, since the formula \eqref{Lie.o} refers to the abelian group $A_o$ (with $o$ the neutral element), its presentation in $\mathcal{V}(A_o)$ takes the simple form
\begin{equation}\label{Lie.ab}
[a,b]_o = [a,b] - [a,o] + [o,o] -[o,b].
\end{equation}
The definition of $[a,b]_o$ is obtained by applying the linearising equation \eqref{aff.lin} to both arguments of the bi-affine map $[-,-]$. Hence $[-,-]_o$ is a bilinear operation on $A_o$. The antisymmetry follows from \eqref{anti} and the Jacobi identity from \eqref{Jacobi}.

\begin{remark}\label{rem.line}
The procedure described in Proposition~\ref{prop.Lie} might reduce non-isomorphic Lie affgebras  to isomorphic Lie algebras. Consider an affine line $\mathbb{A}$, i.e.\ an affine space generated by two elements $a\neq b$,
$
\mathbb{A} = \{ \lambda\triangleright_a b \;|\; \lambda \in \FF\}.
$  Note that $\lambda\triangleright_a b = \mu\triangleright_a b$ if and only if $\lambda = \mu$.
For any $o\in \AA$, $\mathbb{A}$ reduces to a one-dimensional vector space an therefore Proposition~\ref{prop.Lie} yields only one Lie algebra structure on $\mathcal{V}(\AA_o)$ (the zero Lie bracket). On the other hand, for any $\zeta\in \FF$, 
\begin{equation}\label{line}
[a,a] = a, \qquad [a,b] = \zeta\triangleright_a b, \qquad [b,b] = b,
\end{equation}
defines a Lie bracket on the affine space $\AA$\footnote{The definition \eqref{line} is equivalent to say that $[x,y]= \zeta\triangleright_x y$, for all $x,y\in \AA$. The latter Lie bracket can be defined on any affine space, not necessarily one-dimensional one.}. Let $[-,-]_i$ denote the bracket corresponding to $\zeta_i$, $i=1,2$. Any affine map $f:\AA\to \AA$ is fully determined by its values on $a$ and $b$. Suppose that
\begin{equation}\label{aff.iso}
f(a) = \lambda\triangleright_a b, \qquad f(b) = \mu\triangleright_ab, \qquad \lambda,\mu \in \FF.
\end{equation}
Quite lengthy but straightforward algebraic manipulations (that use the definitions of an affine space and a Lie bracket) allow one to conclude that
$[f(a),f(b)]_2 = f([a,b]_1)$ if and only if 
\begin{equation}\label{equal}
(\mu\zeta_2 - \lambda\zeta_2 +\lambda) \triangleright_a b = (\mu\zeta_1 - \lambda\zeta_1 +\lambda) \triangleright_a b,
\end{equation}
so that $(\zeta_1-\zeta_2)\mu = (\zeta_1-\zeta_2)\lambda$. Hence either $\zeta_1 =\zeta_2$ or $\lambda =\mu$. The latter means that $f$ is not onto and hence not an isomorphism either. Therefore, different choices of $\zeta$ yield non-isomorphic Lie affgebras on $\AA$ with the brackets defined by \eqref{line}.
\end{remark}

\section{The special normalised complex affine Lie affgebras.}
In this section we discuss a family of explicit examples of Lie affgebras. 
\begin{proposition}\label{prop.san}
Let
\begin{equation}\label{san}
\sna n \CC := \{a\in \sl{n\!+\!1} \CC \;|\; \forall j, \; \sum_{i=1}^na_{ij} = 1=\sum_{i=1}^na_{ji} \}.
\end{equation}
Then $\sna n \CC$ is an $n^2-1$-dimensional Lie affgebra with the affine space structure
\begin{equation}\label{san.aff}
\langle a, b ,c\rangle = a-b+c, \qquad \lambda\triangleright_ab = \lambda b + (1-\lambda)a,
\end{equation}
and the Lie bracket
\begin{equation}\label{san.Lie}
[a,b] = ab-ba+b,
\end{equation}
where addition, multiplication (by scalars) etc.\ are the standard operations on matrices.
\end{proposition}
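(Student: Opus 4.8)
The plan is to verify, in order, three things: (1) that $\sna{n}{\CC}$ is closed under the proposed affine-space operations \eqref{san.aff} and is genuinely an affine space of dimension $n^2-1$; (2) that the bracket \eqref{san.Lie} is bi-affine and takes values in $\sna{n}{\CC}$; and (3) that it satisfies the two heap-identities \eqref{anti} and \eqref{Jacobi} of Definition~\ref{def.Lie}.

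For step (1), first I would note that $\sna{n}{\CC}$ is cut out of $\sl{n\!+\!1}{\CC}$ by the affine conditions $\tr(a)=0$ and $\sum_i a_{ij} = 1 = \sum_i a_{ji}$ for all $j$; in fact the ambient matrix size should be $(n{+}1)\times(n{+}1)$, so a traceless matrix with all row and column sums equal to $1$ forms a torsor. Writing $e$ for the matrix with all entries equal to $1/(n{+}1)$ one checks $e\in\sna{n}{\CC}$ (its trace is $1$... so in fact the correct base point is a scalar multiple; I would pin this down, e.g.\ the matrix with $a_{ij}=\tfrac1{n+1}$ has row sums $1$ but trace $1$, so one instead takes a representative such as a suitable circulant). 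The set of differences $a-b$ with $a,b\in\sna{n}{\CC}$ is exactly the linear subspace of matrices with zero trace and zero row and column sums, whose dimension is $(n{+}1)^2 - 1 - 2n = n^2-1$; this is the associated vector space, and the formulas \eqref{san.aff} are the standard heap and affine-combination operations on any subset of a vector space that happens to be an affine subspace, so the axioms \eqref{heap} and \eqref{action.ax} are inherited for free.

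For step (2), bi-affinity of $[a,b]=ab-ba+b$ is the crux of the bookkeeping: I would check that for fixed $a$ the map $b\mapsto ab-ba+b$ respects $\langle-,-,-\rangle$ and $\lambda\triangleright_a-$, and symmetrically for fixed $b$; since $ab-ba$ is bilinear and $b\mapsto b$ is affine, this is immediate. To see the bracket lands in $\sna{n}{\CC}$ one verifies three closure properties of $[a,b]$ when $a,b\in\sna{n}{\CC}$: tracelessness, $\tr(ab-ba+b)=\tr(ab)-\tr(ba)+\tr(b)=0$; and the row/column-sum condition. For the latter, let $u$ be the column vector of all ones; then $a,b\in\sna{n}{\CC}$ means $u^{\mathsf T}a = u^{\mathsf T}$ and $au = u$, likewise for $b$. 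Hence $u^{\mathsf T}(ab-ba+b) = u^{\mathsf T}b - u^{\mathsf T}b + u^{\mathsf T} = u^{\mathsf T}$ and $(ab-ba+b)u = au - bu + u = u - u + u = u$, so all row and column sums of $[a,b]$ are $1$.

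For step (3), I would reduce the two affgebra axioms to matrix identities. Using $\langle x,y,z\rangle = x-y+z$ from \eqref{san.aff}, condition \eqref{anti} reads $[a,b]-[a,a]+[b,a] = [b,b]$, i.e.\ $(ab-ba+b)-(a)+( ba-ab+a) = b$, which collapses to $b=b$. Similarly \eqref{Jacobi} becomes $[a,[b,c]]-[a,a]+[b,[c,a]]-[b,b]+[c,[a,b]] = [c,c]$; expanding $[a,[b,c]] = a[b,c]-[b,c]a+[b,c]$ and summing cyclically, the $+[b,c]+[c,a]+[a,b]$ terms together with the $-a-b$ and $-[c,c]=-c$ terms must cancel, leaving the genuinely bracket-like part $\sum_{\mathrm{cyc}}\big(a(bc-cb) - (bc-cb)a\big)$, which vanishes by the ordinary Jacobi identity for the matrix commutator. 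The only real obstacle is arithmetic care in this last expansion — making sure every linear (non-commutator) term cancels against the $[a,a],[b,b],[c,c]$ correction terms — but it is a finite, mechanical check; conceptually the example works precisely because \eqref{san.Lie} is the ``affinisation'' $[a,b]_v + b$ of the commutator Lie bracket on the affine subspace, and Proposition~\ref{prop.Lie} with $o$ any chosen base point in $\sna{n}{\CC}$ will then return the Lie algebra $\sl{n\!+\!1}{\CC}$ restricted to the subspace of traceless zero-row-and-column-sum matrices.
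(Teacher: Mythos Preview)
Your approach is essentially the same as the paper's and is correct in outline. One genuine slip: your dimension formula $(n{+}1)^2 - 1 - 2n$ equals $n^2$, not $n^2-1$. The zero-row-sum and zero-column-sum conditions together impose $2(n{+}1)-1 = 2n+1$ independent linear constraints (the sum of all row sums equals the sum of all column sums, so one condition is redundant), and the trace condition is independent of these (e.g.\ the $2\times 2$ matrix with rows $(1,-1)$, $(-1,1)$ has vanishing row and column sums but trace $2$); hence the correct count is $(n{+}1)^2 - (2n+1) - 1 = n^2-1$. The paper instead obtains the dimension by exhibiting which $n^2-1$ entries of the matrix can be chosen freely. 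Your all-ones-vector verification that $[a,b]\in\sna{n}{\CC}$ is clean and is a point the paper actually passes over in silence; on the other hand, your meandering search for a specific base point $e$ is unnecessary and can simply be dropped --- non-emptiness is obvious (any cyclic permutation matrix works), and that is all that is needed.
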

\begin{proof}
Although neither addition nor subtraction of matrices in $\sna n \CC$ gives a matrix in $\sna n \CC$, the combination of addition and subtraction \eqref{san.aff} does, and hence $\sna n \CC$ is an abelian heap. Similarly, $\lambda b + (1-\lambda)a$ is a traceless matrix in which the sum of all elements in each row and all elements in each column equals one, and so is an element of $\sna n \CC$. Clearly, for all $a,b,c,d\in \sna n \CC$  and $\lambda, \mu\in \CC$,
$$
\begin{aligned}
\lambda\triangleright_a(b-c+d) &=\lambda\triangleright_ab-\lambda\triangleright_ac +\lambda\triangleright_ad,
\end{aligned}
$$
and 
$$
\begin{aligned}
\mu \triangleright_{\lambda \triangleright_ab}(\lambda \triangleright_ac) &= \lambda \mu c + \mu(1-\lambda)a +(1-\mu)\lambda b +(1-\mu)(1-\lambda)a \\
&= \lambda \mu c + (1-\mu)\lambda b +(1-\lambda)a  = \lambda\triangleright_a(\mu \triangleright_bc),
\end{aligned}
$$
and hence $\lambda\triangleright_a-:A\to A$ is an affine map. Similarly one checks that $-\triangleright_ab:\CC\to A$ is an affine map. The conditions \eqref{ass.un} and \eqref{base.change} are verified by a routine calculation. 

Any $(n+1)\times (n+1)$ matrix with freely chosen entries denoted by asterisks
$$
\begin{pmatrix}
* & * & * & \ldots & * & \square\cr
* & * & * & \ldots & * & \square\cr
\ldots & \ldots &\ldots & \ldots & \ldots & \ldots \cr
* & * & * & \ldots & * & \square\cr
\square & * & * & \ldots & * & \square\cr
\square & \square & \square& \ldots & \square & \square
\end{pmatrix},
$$
can be uniquely completed to a matrix in $\sna n \CC$; hence  
 $\dim  \sna n \CC = n^2 -1$. Next, since $[a,a]=a$ we find
$$
[a,b]-[a,a]+[b,a] = ab-ba+b -a +ba -ab +a = b = [b,b].
$$
Finally, the Jacobi identity \eqref{Jacobi} follows from the fact that the commutator satisfies the standard Jacobi identity and the observation that $[a,b]-[b,b]=ab-ba$.
\end{proof}

\begin{example}\label{ex.san}
There is only one $2\times 2$ traceless matrix with sums of elements in each row and each column equal to one, and hence
$$
\sna 1 \CC = \left\{\begin{pmatrix} 0 &1 \cr 1 &0\end{pmatrix}\right\}. 
$$

For $n=2$, every element of $\sna 2 \CC$ has the form
$$
A^{ab}_{~c}:= \begin{pmatrix} a & b & 1-a-b\cr 1- 2a-b-2c & c & 2a+b+c\cr a+b+2c & 1-b-c &-a-c\end{pmatrix},
$$
where $a,b,c\in \CC$. Therefore $\sna 2 \CC$  is a barycentric (i.e.\ given as a linear combination with scalars adding up to 1) span of 
$$
A^{00}_{~0} =\begin{pmatrix} 0 & 0 & 1\cr 1 & 0 & 0\cr 0 & 1& 0\end{pmatrix},\qquad 
A^{01}_{~0} = \begin{pmatrix} 0 & 1 & 0\cr 0 & 0 & 1\cr 1 & 0& 0\end{pmatrix},
$$
$$
A^{00}_{~1}=\begin{pmatrix} 0 & 0 & 1\cr -1 & 1 & 1\cr 2 & 0& -1\end{pmatrix},\qquad 
A^{10}_{~0}=\begin{pmatrix} 1 & 0 & 0\cr -1 & 0 & 2\cr 1 & 1& -1\end{pmatrix}.
$$
The action of the Lie bracket on these generators comes out as:
\begin{subequations}\label{san2}
\begin{equation}
 [A^{01}_{~0},A^{00}_{~1}]=-2A^{10}_{~0}+A^{01}_{~0}+2A^{00}_{~1},\quad [A^{00}_{~0},A^{00}_{~1}]=2A^{10}_{~0}-A^{01}_{~0}, \quad
\end{equation}
\begin{equation}
 [A^{00}_{~0},A^{10}_{~0}] = 2A^{10}_{~0}+A^{01}_{~0}-2A^{00}_{~1},\quad [A^{01}_{~0},A^{10}_{~0}] = 2A^{00}_{~1}-A^{01}_{~0},
\end{equation}
\begin{equation}
[A^{00}_{~0},A^{01}_{~0}]=A^{01}_{~0},\; \;[A^{00}_{~1},A^{10}_{~0}]=2A^{10}_{~0}+A^{01}_{~0}+A^{00}_{~1}-3A^{00}_{~0}.
\end{equation}
\end{subequations}
\end{example}

\begin{proposition}\label{prop.san.lin}
For any $o\in \sna n \CC$, the Lie algebra $\mathcal{L}(\sna n \CC ;o)$ is isomorphic to the Lie sub-algebra $\sl{n+1}\CC_0$ of $\sl{n+1}\CC$ consisting of all matrices with vanishing sums of elements in every row and every column:
\begin{equation}\label{sl.0}
\sl{n\!+\!1} \CC_0 := \{a\in \sl{n\!+\!1} \CC \;|\; \forall j, \; \sum_{i=1}^na_{ij} = 0=\sum_{i=1}^na_{ji}\}.
\end{equation}
\end{proposition}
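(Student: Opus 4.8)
The plan is to make the linearisation construction of Section~2 completely explicit for $A=\sna n\CC$ and then read the reduced bracket off formula \eqref{Lie.o}. By \eqref{san.aff}, both the heap operation and the action on $\sna n\CC$ are the ambient matrix operations, so the abelian group $\sna n\CC_o$ has addition $\langle a,o,b\rangle=a-o+b$ with neutral element $o$, and the scalars act by $\lambda\triangleright_o a=\lambda a+(1-\lambda)o$. Hence the translation $\phi_o\colon a\mapsto a-o$ satisfies $\phi_o(\langle a,o,b\rangle)=(a-o)+(b-o)$ and $\phi_o(\lambda\triangleright_o a)=\lambda(a-o)$, so it is linear and injective from $\mathcal{V}(\sna n\CC_o)$ to the matrix ring. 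For the image: every $a\in\sna n\CC$ and $o$ itself are traceless with unit row and column sums, so each difference $a-o$ is traceless with vanishing row and column sums, and conversely every $m\in\sl{n+1}\CC_0$ equals $(m+o)-o$ with $m+o\in\sna n\CC$. Therefore $\phi_o$ is a linear isomorphism $\mathcal{V}(\sna n\CC_o)\to\sl{n+1}\CC_0$.

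The substance of the proof is to check that $\phi_o$ turns the reduced Lie bracket $[-,-]_o$ into the ordinary matrix commutator. From \eqref{san.Lie} one has $[o,o]=o$ and $[x,y]=xy-yx+y$ for all $x,y$, so expanding the five-fold heap expression \eqref{Lie.o} in matrix arithmetic and then subtracting $o$ gives
\[
\phi_o([a,b]_o)=[a,b]_o-o=(ab-ba)-(ao-oa)-(ob-bo).
\]
Now put $a=x+o$ and $b=y+o$ with $x=\phi_o(a),\,y=\phi_o(b)\in\sl{n+1}\CC_0$ and expand each of the three commutators $ab-ba$, $ao-oa$, $ob-bo$ by bilinearity; all terms containing $o$ cancel, leaving $\phi_o([a,b]_o)=xy-yx$, that is $\phi_o([a,b]_o)=[\phi_o(a),\phi_o(b)]$ in $\sl{n+1}\CC$. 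Finally, if $x,y$ have vanishing row and column sums then so do $xy$ and $yx$ (the trace condition being automatic), so $\sl{n+1}\CC_0$ is closed under the commutator and is a Lie subalgebra of $\sl{n+1}\CC$; combined with the previous paragraph this shows that $\phi_o$ is an isomorphism of Lie algebras $\mathcal{L}(\sna n\CC;o)\cong\sl{n+1}\CC_0$, for every $o\in\sna n\CC$.

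I do not expect a genuine obstacle here. The only point demanding care is the bookkeeping in the displayed cancellation, together with the routine but easily overlooked fact that the subtractions and additions occurring in \eqref{Lie.o}--\eqref{Lie.ab} are, in this example, literally those of the matrix ring, so that working inside $\mathcal{V}(\sna n\CC_o)$ is the same as translating matrices by $-o$. A slightly slicker route to the cancellation is to invoke the identity $[a,b]-[b,b]=ab-ba$ already noted in the proof of Proposition~\ref{prop.san}: it rewrites $\phi_o([a,b]_o)$ purely in terms of commutators before the substitution $a=x+o$, $b=y+o$ is made, after which bilinearity of the commutator finishes the job immediately.
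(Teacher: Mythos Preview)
Your proof is correct and follows essentially the same approach as the paper: both use the translation $a\mapsto a-o$ as the isomorphism and reduce $[-,-]_o$ to the matrix commutator. The paper's computation is marginally slicker in that it writes the identity $[a,b]_o=(a-o)(b-o)-(b-o)(a-o)+o$ directly (exactly the ``slicker route'' you mention at the end), whereas you first expand $[a,b]_o-o$ into three commutators and then substitute $a=x+o$, $b=y+o$; the content is identical.
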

\begin{proof}
In view of  Proposition~\ref{prop.Lie} (and  \eqref{Lie.ab} in particular) and the definition \eqref{san.Lie} of the Lie bracket on $ \sna n \CC$ one immediately finds that
\begin{equation}\label{comm}
[a,b]_o = (a-o)(b-o) -(b-o)(a-o) +o,
\end{equation}
where the right hand side uses the usual addition (subtraction) of matrices 
(this is not the same as addition in $\mathcal{V}(\sna n \CC_o)$, as the latter is relative to $o$ and hence defined by the formula $a-o+b$).  Matrices $a-o$ where $a\in \sna n\CC$ are traceless and the sum of elements in each row and column is 0, hence
$$
\{ a-o \;|\; a\in \sna n \CC\} = \sl{n+1} \CC_0 .
$$
The linear transformation 
$$
\mathcal{L}(\sna n \CC ;o)\to \sl{n+1} \CC_0, \qquad a\mapsto a-o,
$$
sends the Lie bracket $[-,-]_o$ to the commutator and hence it is the required isomorphism of Lie algebras. 
\end{proof}

\begin{proposition}\label{prop.san2}
For all $o\in \sna 2 \CC$, 
$$
\mathcal{L}(\sna 2 \CC ;o)\cong \sl 2\CC.
 $$
\end{proposition}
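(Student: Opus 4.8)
The plan is to use Proposition~\ref{prop.san.lin} to reduce the assertion, for every $o$, to the single Lie-algebra isomorphism $\sl 3 \CC_0\cong\sl 2\CC$, and then to exhibit that isomorphism explicitly. Let $v\in\CC^3$ be the all-ones column vector and let $v^{\mathrm{t}}$ denote its transpose; put $W=\{x\in\CC^3 : x_1+x_2+x_3=0\}=\Ker(v^{\mathrm{t}})$, a two-dimensional subspace with $\CC^3=\CC v\oplus W$ since $v^{\mathrm{t}}v\neq 0$. The key observation is that the row-sum and column-sum conditions defining $\sl 3\CC_0$ are coordinate-free: a matrix $a$ has all row sums zero exactly when $av=0$, and all column sums zero exactly when $v^{\mathrm{t}}a=0$, that is, when $\im a\subseteq W$. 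Hence every $a\in\sl 3\CC_0$ annihilates $\CC v$ and maps $\CC^3$ into $W$; in particular $a(W)\subseteq W$, so $a$ restricts to an endomorphism $a|_{W}\in\operatorname{End}(W)$.

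First I would verify that the restriction map $\rho\colon\sl 3\CC_0\to\operatorname{End}(W)$, $a\mapsto a|_W$, is a homomorphism of Lie algebras for the commutator brackets. Indeed $\sl 3\CC_0$ is closed under the commutator (one has $[a,b]v=a(bv)-b(av)=0$ and $v^{\mathrm{t}}[a,b]=(v^{\mathrm{t}}a)b-(v^{\mathrm{t}}b)a=0$), and for $w\in W$ one computes $ab(w)=a(b(w))=(a|_W)((b|_W)(w))$ because $b(w)\in W$, so $(ab)|_W=(a|_W)(b|_W)$, whence $\rho([a,b])=[\rho(a),\rho(b)]$. Next, choosing a basis of $\CC^3$ adapted to the splitting $\CC v\oplus W$, the matrix of any $a\in\sl 3\CC_0$ is block diagonal with blocks $0$ and $a|_W$, so $\tr a=\tr(a|_W)$; since the elements of $\sl 3\CC_0$ are traceless, $\rho$ takes values in $\mathfrak{sl}(W):=\{c\in\operatorname{End}(W):\tr c=0\}$. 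The map $\rho$ is injective, since $a|_W=0$ together with $av=0$ forces $a=0$; it is also surjective, the inverse sending $c\in\mathfrak{sl}(W)$ to the matrix that is $c$ on $W$ and $0$ on $\CC v$ (equivalently, one invokes $\dim\sl 3\CC_0=n^2-1=3$ from Proposition~\ref{prop.san} and $\dim\mathfrak{sl}(W)=\dim\operatorname{End}(W)-1=3$). Thus $\rho$ is a Lie-algebra isomorphism $\sl 3\CC_0\xrightarrow{\ \sim\ }\mathfrak{sl}(W)$. Finally any linear isomorphism $W\cong\CC^2$ identifies $\operatorname{End}(W)$ with $M_2(\CC)$ as associative algebras, and hence $\mathfrak{sl}(W)$ with $\sl 2\CC$ as Lie algebras, so combining with Proposition~\ref{prop.san.lin} gives $\mathcal{L}(\sna 2\CC;o)\cong\sl 3\CC_0\cong\mathfrak{sl}(W)\cong\sl 2\CC$ for every $o\in\sna 2\CC$.

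There is no real obstacle in this argument: once the row/column-sum conditions are recast as $av=0$ and $\im a\subseteq W$, the remainder is routine linear algebra, the only point demanding a moment's attention being the identity $\tr a=\tr(a|_W)$, which the block-diagonal form settles. As an alternative one could argue more abstractly that $\sl 3\CC_0$ is a nonzero three-dimensional complex Lie algebra and check, e.g.\ from the structure constants produced by Proposition~\ref{prop.san.lin} and Example~\ref{ex.san}, that it is perfect and hence non-solvable; the classification of three-dimensional complex Lie algebras then forces it to be $\sl 2\CC$. I would nonetheless prefer the explicit restriction-to-$W$ isomorphism, which is self-contained and exhibits the identification directly.
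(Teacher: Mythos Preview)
Your argument is correct and complete. The paper, however, proceeds differently: it fixes the specific element $o=A^{01}_{~0}$ and then writes down an explicit Chevalley triple inside $\mathcal{V}(\sna 2 \CC_o)$, namely
\[
e=\tfrac{1}{3}\bigl(A^{10}_{~0}+\omega A^{00}_{~1}\bigr),\qquad
f=\tfrac{1}{3}\bigl(A^{10}_{~0}+\omega^2 A^{00}_{~1}\bigr),\qquad
h=-\tfrac{\sqrt{3}}{3}\,i\,A^{00}_{~0},
\]
with $\omega=e^{2\pi i/3}$, and verifies directly from the relations \eqref{san2} that $[h,e]_o=2e$, $[h,f]_o=-2f$, $[e,f]_o=h$.

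Your route---recasting the row- and column-sum conditions as $av=0$ and $\im a\subseteq W=\Ker(v^{\mathrm t})$, and then restricting to $W$---is more conceptual and has two advantages: it avoids any computation with the structure constants of Example~\ref{ex.san}, and it generalises verbatim to show $\sl{n\!+\!1}\CC_0\cong\sl n\CC$ for every $n$, which the paper's explicit-basis method does not. The paper's approach, on the other hand, ties the isomorphism concretely to the affine generators $A^{ab}_{~c}$ already introduced, and exhibits the Chevalley basis in those terms; this is useful if one wants to compute further inside $\sna 2\CC$ rather than merely know the isomorphism type.
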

\begin{proof}
Since up to isomorphism the choice of $o$ makes no difference, let us choose $o:=A^{01}_{~0}$, set $\omega = e^{\frac 23 \pi i}$ and define
$$
e= \frac 1 3(A^{10}_{~0}+ \omega A^{00}_{~1}), \quad f= \frac 1 3(A^{10}_{~0}+ \omega^2 A^{00}_{~1}), \quad h = -\frac{\sqrt 3} 3 i A^{00}_{~0}.
$$
Then, using  relations \eqref{san2} one finds that 
$$
[h,e]_o= 2e, \quad [h,f]_o=-2f, \quad [e,f]_o =h,
$$
i.e., $h,e,f$ form the Chevalley basis for the Lie algebra $\sl 2 \CC$.
\end{proof}

\end{document}